\title[quasiconvexity]{Two characterizations of quasiconvexity}
\author{W\l{}odzimierz Fechner}
\address{Institute of Mathematics, Lodz University of Technology, al. Politechniki 8, 93-590 \L\'od\'z, Poland}
\email{wlodzimierz.fechner@p.lodz.pl}
\newtheorem{thm}{Theorem}
\newtheorem{cor}{Corollary}
\theoremstyle{remark}
\newtheorem{rem}{Remark}
\newtheorem{ex}{Example}
\theoremstyle{definition}
\newcommand{\R}{\mathbb{R}}
\newcommand{\Q}{\mathbb{Q}}
\newcommand{\eps}{\varepsilon}
\keywords{quasiconvexity, quasiconvex risk measure, minimax theorem, semicontinuity}
\subjclass[2020]{26A51, 26B25}
\begin{document}

\begin{abstract}
We present two characterisations of quasiconvexity for radially semicontinuous mappings defined on a convex subset of a real linear space. As applications, we obtain an extension of Sion's minimax theorem and two characterisations of quasiconvex risk measures.
\end{abstract}

\maketitle 

\section{Introduction}

Throughout this paper, we assume that $X$ is a real linear space, $D\subset X$ is a nonempty convex set and $\overline{\R}=\R \cup \{-\infty, +\infty\}$. 
For $x , y \in X$, by the segment $[x,y]$ we mean the set $\{tx +(1-t)y : t \in [0,1] \}$. On a segment, we will use the usual metric topology. Open subsegments will be denoted by $]u,v[\subset [x,y]$, to make a distinction between pairs of points $(u,v) \in X \times X$. We apply a similar convention for intervals on the real line.
Besides the topology, the segment $[x,y]$ has a natural total order. With these notations, the sets $[x,y]$ and $[y,x]$ contain the same elements and have identical topologies and reversed orders.

A map $f\colon D \to \overline{\R}$ is called \emph{quasiconvex} if 
\begin{equation}
\label{qc}
\forall_{x, y \in D}\forall_{t \in [0,1]}\, f(tx+ (1-t)y)\leq \max\{ f(x), f(y) \}
\end{equation}
or equivalently:
$$
\forall_{x, y \in D}\forall_{z \in [x,y]}\, f(z)\leq \max\{ f(x), f(y) \}.
$$
Mapping $f\colon D \to \overline{\R}$ is termed \emph{quasiconcave} if $-f$ is quasiconvex. 
We speak about strict quasiconvexity or strict quasiconcavity if the inequality sign in the respective definition is sharp whenever $x \neq y$ and $t\notin\{0,1\}$.
For a comprehensive reference to the theory of quasiconvex and quasiconcave functions the reader is referred to the classical monograph by Roberts and Varberg \cite{RV}.

If $Y$ is a metric space, then a function $f\colon Y \to \overline{\R}$ is \emph{upper semicontinuous} if
$$\forall_{x_0\in Y} \limsup_{x\to x_0}f(x) \leq f(x_0) $$ and $f$ is \emph{lower semicontinuous} if $-f$ is upper semicontinuous.
We refer the reader to the book by \L ojasiewicz \cite{Lo} for properties of semicontinuous functions that we will need later on.

A map $f\colon D \to \overline{\R}$ is called \emph{radially upper or lower semicontinuous} if it is upper or lower semicontinuous, respectively on each segment $[x,y] \subset D$. 
If $X$ is a normed space and $f$ is upper or lower semicontinuous, then a fortiori is radially upper or lower semicontinuous, respectively.

\section{Main results}

We aim to characterize quasiconvexity under minimal regularity assumptions.
This is done in Corollaries \ref{c2} and \ref{c4} below.
In our first theorem, we will prove a modification of a theorem of Dar\'{o}czy and P\'{a}les \cite{DP}*{Theorem 2.1}, where they dealt with nonconvex functions. The same result, but with a different proof can be found in a preprint by Leonetti \cite{Le}. We will use some ideas of their proofs and make some additional observations to obtain a more detailed description of the set on which a given mapping is not quasiconvex. In Remark \ref{r1} later on we propose a rephrasing and a slight extension of \cite{DP}*{Theorem 2.1}.

For a fixed function $f\colon D \to \overline{\R}$ we will denote
$$ T := \{ (x, y, z) \in D \times D \times D : z \in ]x,y[ \, \textrm{ and } \,f(z)> \max\{ f(x), f(y) \}  \}$$
and $$T(x,y) = \{ z \in ]x,y[ : (x,y, z) \in T\}$$ for $x, y  \in D$.

\begin{thm}\label{t1}
Suppose that $f\colon D \to \overline{\R}$ is a radially lower semicontinuous function. Then for every $x, y  \in D$ there exists an at most countable set $A=A(x,y)$ (possibly empty)  and the points $u_i, v_i \in [x,y],  i \in A$ such that:
\begin{enumerate}
	\item[(a)] $$\forall_{i \in A} u_i\neq v_i$$
	\item[(b)] $$\forall_{i, j \in A}\, [ i\neq j \, \Rightarrow \, ]u_i,v_i[\cap ]u_j,v_j[= \varnothing],$$
	\item[(c)] $$T(x,y) = \bigcup_{i \in  A}  ]u_i, v_i[,$$ 
	\item[(d)] $$\forall_{i \in A}  \, T(u_i,v_i)= ]u_i,v_i[  .$$
\end{enumerate}
\end{thm}
\begin{proof}
Let $x, y  \in D$. If $T(x,y)= \varnothing$, then the properties (a)-(d) hold trivially with $A=\varnothing$.

Suppose $T(x,y)\neq \varnothing$. Then $x \neq y$, $f(x)<+ \infty$,  $f(y)<+\infty$ and
$$
T(x,y)=\{z \in ]x,y[: f(z)> f(x) \}   \cap \{z \in ]x,y[: f(z)> f(y) \}.   
$$
Since $f$ is  radially lower semicontinuous, it follows that the set $T(x,y)$ is open in $]x,y[$, so there exists a family  $]u_i,v_i[\subset ]x,y[$,\, $i\in A $, where $\mathrm{card}(A)\leq \aleph_0$, satisfying the conditions (a)-(c).

It remains to prove (d). By definition $T(u_i,v_i)\subseteq ]u_i,v_i[$. Suppose that there exists $z \in ]u_i,v_i[ \setminus T(u_i,v_i)$. Then $f(z)\leq f(u_i)$ or $f(z)\leq f(v_i)$. Suppose  $f(z)\leq f(u_i)$. Since $z \in  T(x,y)$, it follows
$$f(x)<f(z)\leq f(u_i) \quad \textrm{and}\quad f(y)<f(z)\leq f(u_i) ,$$
implying $u_i \in T(x,y)$; a contradiction. Similarly, $f(z)\leq f(v_i)$ would imply $v_i \in T(x,y)$; a contradiction again. It follows $]u_i,v_i[ \subseteq T(u_i,v_i)$ and so, (d) holds too.
\end{proof}

\begin{cor}\label{c1}
Assume that $f\colon D \to \overline{\R}$ is a radially lower semicontinuous function. Then, either $f$ is quasiconvex, or 
$$\exists_{x, y \in D}\forall_{t \in ]0,1[}\, f(tx+ (1-t)y)> \max\{ f(x), f(y) \}.$$
\end{cor}
\begin{proof}
By Theorem \ref{t1}, if for each $x, y \in D$ one has $A = \varnothing$, then $T= \varnothing$, which implies that $f$ is quasiconvex. Otherwise, if for some $x, y \in D$ the set $T(x,y)$ is nonempty, then
by point (d) of Theorem \ref{t1} the set $T$ contains a subset of the form  $$\{u_i\}\times\{ v_i\}\times ]u_i,v_i[$$ with  some $u_i, v_i \in D$ such that $u_i\neq v_i$, which with the settings $x=u_i$ and $y = v_i$ proves the assertion.
\end{proof}

The next corollary follows immediately from the first one and it is an analogue of \cite{DP}*{Corollary 2.3}, see also \cite{Le}*{Theorem 1}. This is our first characterization of quasiconvexity.
Namely, for a radially lower semicontinuous function $f$, if for every pair $x, y \in D$ there is at least one number $t\in ]0,1[$ such that inequality in formula \eqref{qc} is satisfied, then it holds for all $t \in [0,1]$,  which means that $f$ is quasiconvex.

\begin{cor}\label{c2}
Assume that $f\colon D \to \overline{\R}$ is a radially lower semicontinuous function. 
then $f$ is quasiconvex on $D$ if and only if
$$\forall_{x, y \in D}\exists_{z \in ]x,y[}\, f(z) \leq \max\{f(x), f(y) \}.$$

\end{cor}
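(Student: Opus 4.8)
The plan is to argue by contraposition, leaning entirely on the structural information supplied by Theorem \ref{t1}. Suppose that $f$ fails to be quasiconvex; I want to exhibit a single pair of points of $D$ for which the displayed hypothesis breaks down, that is, a pair $u, v$ with $f(z) > \max\{f(u), f(v)\}$ for \emph{every} $z \in ]u,v[$.

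Failure of quasiconvexity means precisely that the set $T$ is nonempty, so there is at least one pair $x, y \in D$ with $T(x,y) \neq \varnothing$. Applying Theorem \ref{t1} to this pair yields an at most countable family $\{u_i, v_i : i \in A\}$, and by part (c) the condition $T(x,y) \neq \varnothing$ forces $A \neq \varnothing$. I would fix any index $i \in A$; part (a) then guarantees $]u_i, v_i[ \neq \varnothing$, and in particular $u_i \neq v_i$.

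The decisive step is to read off part (d). The inclusion $\{u_i\}\times\{v_i\}\times ]u_i,v_i[ \subseteq T$ says exactly that $(u_i, v_i, z) \in T$ for every $z \in ]u_i, v_i[$, which by the definition of $T$ means $f(z) > \max\{f(u_i), f(v_i)\}$ for all such $z$. Hence the pair $(u_i, v_i)$ possesses no interior point at which the inequality of the hypothesis can hold; applying that hypothesis to $x = u_i$ and $y = v_i$ therefore produces a contradiction, and I conclude that $f$ is quasiconvex.

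I do not anticipate any genuine difficulty, since all of the analysis — the openness of $T(x,y)$ coming from radial lower semicontinuity, and the propagation of the strict inequality all the way out to the endpoints of each maximal interval — has already been carried out inside Theorem \ref{t1}. The only point deserving attention is logical rather than technical: one must test the hypothesis at the endpoints $u_i, v_i$ of a maximal ``bad'' interval, not at the original pair $x, y$. It is exactly the maximality recorded in part (d) that makes these endpoints the correct witnesses for the contradiction.
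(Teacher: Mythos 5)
Your proof is correct and follows essentially the same route as the paper: the paper deduces Corollary \ref{c2} from Corollary \ref{c1}, whose proof is itself just a reading of part (d) of Theorem \ref{t1}, which is exactly the step you carry out directly. If anything, invoking Theorem \ref{t1}(d) explicitly is slightly more precise than citing Corollary \ref{c1} alone, since the contradiction requires testing the hypothesis at the endpoints $u_i, v_i$ of a maximal interval --- information recorded in (d) but not in the bare statement of strict quasiconcavity on the open segment.
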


\begin{rem}\label{r1}
Utilizing some ideas of Theorem \ref{t1} one can rewrite \cite{DP}*{Theorem 2.1} of Dar\'{o}czy and P\'{a}les to obtain a reformulation that contains more information about the set of points of nonconvexity. Since the modification is straightforward, then we will omit the details. For a radially lower semicontinuous map $f\colon D \to \overline{\R}$ we introduce
$$ T_{conv} := \{ (x, y, t) \in D \times D \times [0,1] :  f(tx + (1-t)y)>  tf(x) + (1-t) f(y)  \}$$
and $$T_{conv}(x,y) = \{ t \in [0,1] : (x,y, t) \in T_{conv}\}$$ for $x, y  \in D$.
Then, analogous statements to (a)-(d) of Theorem \ref{t1} hold true. 

It is also interesting to note that many of the results of Kuhn \cites{K0, K} concerning $t$-convex functions or $(s,t)$-convex functions can be reformulated in the language of the set $T_{conv}$. In particular, in \cite{K0}*{Theorem 1K} it is assumed that $I$ is a real interval and $f\colon I \to \R\cup \{-\infty\}$ is an arbitrary function. This statement can be reformulated as follows: if there exists some $t \in ]0,1[$ such that for each $x, y \in I$ one has $(x,y, t)\in T_{conv}$, then for every $x, y \in I$ the set $T_{conv}(x,y)$ contains all rational points of $[0,1]$.
\end{rem}

The next example shows that our Theorem \ref{t1} and \cite{DP}*{Theorem 2.1} of Dar\'{o}czy and P\'{a}les are incomparable in general. 

\begin{ex}
Every monotone function $f\colon D \to \R$ with $D \subseteq \R$ is quasiconvex and quasiconcave simultaneously. For such mapping $f$ one has $T = \varnothing$. On the other hand, if $f$ is strictly concave, then $$T_{conv} =\{ (x, y, t) \in D : x\neq y  , \,  t\in ]0,1[\}.$$ Therefore, the sets $T$ and $T_{conv}$ can be fairly different.
\end{ex}

Next, we will show that the lower semicontinuity assumption cannot be dropped.

\begin{ex}
If $f= \mathbbm{1}_{\Q}$, i.e. the indicator function of the rationals, then
$$T = \{(x, y, z ) \in \R^3 : x, y \in \R\setminus \Q \, \textrm{ and } \, z \in [x,y] \cap \Q  \}.$$
In particular, $T$ contains no subset of the form  $$\{u_i\}\times\{ v_i\}\times ]u_i,v_i[$$
and therefore the assertion of Corollary \ref{c1} does not hold for function $f$. Clearly, $f$ is not lower semicontinuous.

A positive and nontrivial example can be obtained similarly with the aid of the complement of the Cantor set $C\subset [0,1]$. Indeed, if $f= \mathbbm{1}_{[0,1]\setminus C}$, then $f$ is lower semicontinuous, which means that Theorem \ref{t1} is in power. Therefore, there exists an infinite family of open intervals $]u_i,v_i[$ summing up to $[0,1]\setminus C$ that together with $f$ satisfies the second part of the assertion of Corollary \ref{c1}.
\end{ex}

\medskip

In our next theorem, we will deal with local quasiconvexity for upper quasicontinuous mappings defined on a real interval. Roughly speaking, we aim to show that under some assumptions, if a function is not quasiconvex, then it is locally quasiconcave around some point. Such a statement would provide an analogue to a theorem by P\'{a}les \cite{P}*{Theorem 2} for local convexity on the real line. We will obtain a stronger result, which is not surprising since the property of not being quasiconvex is stronger than the property of not being convex. 
Next, we derive a corollary for radially upper semicontinuous maps acting on a convex subset of a linear space. 

Assume that $I\subseteq \R$ is a real interval and $p\in I$ is an interior point of $I$. We say that $f\colon I \to \overline{\R}$ is \emph{locally quasiconvex at point $p$}, if
$$
\exists_{\delta>0}\forall_{x \in ]p-\delta,p[}\forall_{y \in ]p,p+\delta[} \, f(p) \leq \max\{f(x), f(y)\}.
$$
If the inequality is strict, then we speak about strictly locally quasiconvex maps. Locally quasiconcave and strictly locally quasiconcave mappings are understood in an obvious way.

It turns out that the notion of local quasiconvexity is closely related to some property of local maxima of the function in question. Obviously, every monotone function is quasiconvex and quasiconcave, as well. On the other hand, if a function has a strict local maximum at $p$, then it must not be quasiconvex at $p$. We will prove that a kind of the converse is also true under the upper semicontinuity assumption.

\begin{thm}\label{t2}
Assume that $f\colon I \to \overline{\R}$ is an upper semicontinuous function. Then, either $f$ is quasiconvex on $I$, or there exist points $p, q \in \mathrm{int}I$ such that:
\begin{enumerate}
	\item[(a)] $f(p) = f(q)$,
	\item[(b)] $f$ attains local maxima at $p$ and $q$,
	\item[(c)] the local maximum of $p$ is strict from the left and the local maximum of $q$ is strict from the right;
\end{enumerate}
in particular, in the latter case $f$ is locally strictly quasiconcave at $p$ and $q$.
\end{thm}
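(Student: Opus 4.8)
The plan is to argue by contraposition: assuming $f$ is not quasiconvex, I will manufacture the two points $p$ and $q$. Negating \eqref{qc} produces points $x, y \in I$ and a point $z \in \,]x,y[$ with $f(z) > \max\{f(x), f(y)\}$; after possibly interchanging $x$ and $y$ I may assume $x < y$ in the order of $\R$, so that $x < z < y$. Writing $M := \max\{f(x), f(y)\}$, strictness of $f(z) > M$ forces $M < +\infty$. I now discard the rest of $I$ and work on the compact segment $[x,y]$. Since $f$ is upper semicontinuous, its restriction to $[x,y]$ is upper semicontinuous on a compact set and therefore attains its supremum; call it $m := \max_{[x,y]} f$. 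Because $f(z) \le m$, we get $m \ge f(z) > M$, so in particular $m > f(x)$ and $m > f(y)$.

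Next I consider the set of maximizers $F := \{ t \in [x,y] : f(t) = m \}$. This set is nonempty, and it is closed: for $m \in \R$ it equals the upper level set $\{t \in [x,y] : f(t) \ge m\}$, which is closed by upper semicontinuity, while for $m = +\infty$ it is the intersection $\bigcap_{a \in \R}\{t \in [x,y] : f(t)\ge a\}$ of closed sets. Hence $F$ is compact, and since $f(x), f(y) < m$ we have $F \subseteq \,]x,y[ \,\subseteq \mathrm{int}I$. I then set $p := \min F$ and $q := \max F$; both exist and lie in $]x,y[$. Now $f(p) = f(q) = m$, which is (a); as $p$ and $q$ are interior to $[x,y]$ and $m$ is the maximum of $f$ over $[x,y]$, both are local maxima, giving (b). For the strictness in (c): if $t \in [x,p[$ then $t \notin F$, so $f(t) \ne m$, and since $m$ is the maximum this forces $f(t) < m = f(p)$; taking $\delta := p - x > 0$ shows the maximum at $p$ is strict from the left. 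Symmetrically, for $t \in \,]q,y]$ one has $f(t) < m = f(q)$, so the maximum at $q$ is strict from the right, with $\delta := y - q > 0$.

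Finally, the last assertion about strict local quasiconcavity falls out: combining the local maximum at $p$ with its strictness from the left, a $\delta$ small enough that $f \le f(p)$ on $]p-\delta, p+\delta[$ and $f < f(p)$ on $]p-\delta, p[$ yields $f(p) > \min\{f(u),f(v)\}$ for all $u \in \,]p-\delta, p[$ and $v \in \,]p, p+\delta[$, and likewise at $q$. The step I expect to require the most care is this reduction to the compact segment together with the closedness of $F$: everything hinges on upper semicontinuity guaranteeing both that the maximum $m$ is actually attained on $[x,y]$ and that the top level set is closed, so that $\min F$ and $\max F$ are meaningful. This is precisely the dual of the role played by lower semicontinuity (openness of the violation set) in Theorem \ref{t1}, and it explains why the hypothesis cannot be relaxed. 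One should also note that $p$ and $q$ need not be distinct: when $F$ is a singleton we obtain $p = q$ together with a point at which the maximum is strict from both sides, a degenerate but admissible instance of the conclusion.
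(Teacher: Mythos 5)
Your proof is correct and follows essentially the same route as the paper's: negate quasiconvexity, restrict to the compact segment $[x_0,y_0]$, use upper semicontinuity to show the set of maximizers is nonempty and compact, and take $p$ and $q$ to be its minimum and maximum. Your treatment is slightly more careful than the paper's (explicitly handling the case $m=+\infty$ and verifying closedness of the level set), but the underlying argument is identical.
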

\begin{proof}
Assume that $f$ is not quasiconvex. So, there exist $x_0, y_0 \in I$ and $t_0 \in ]0,1[$ such that
$$f(t_0x_0+ (1-t_0)y_0)> \max\{ f(x_0), f(y_0) \}.$$
Without loss of generality, we can assume that $x_0<y_0$, since equality $x_0=y_0$ is impossible.
Let us consider the set of points of the interval $[x_0,y_0]$, where $f$ attains its supremum on $[x_0,y_0]$:
$$H:= \{ z \in  [x_0,y_0] : f(z) = \sup\{ f(t) : t \in  ]x_0,y_0[ \}\, \}.$$
Clearly, $x_0, y_0 \notin H$. Moreover, by the upper semicontinuity, the set $H$ is nonempty and compact. 
Take $p=\min H$ and $q = \max H$.  We have by definition
$$
\forall_{x \in ]x_0, y_0[} \, f(x) \leq f(p) = f(q)
$$
which shows (a) and (b). We also have $x_0 < p\leq q <y_0$ and
$$
\forall_{x \in ]x_0, p[} \, f(x) < f(p) , \quad \forall_{y \in ]q, y_0[} \, f(y) < f(q).
$$
We thus proved point (c). This implies that $f$ is locally strictly quasiconcave at $p$ and at $q$.
\end{proof}

\begin{rem}
In the foregoing theorem, it is not excluded that $p=q$.
\end{rem}

We are at the point to derive from Theorem \ref{t2} our next characterization of quasiconvexity. We begin with an immediate consequence of the preceding theorem in a one-dimensional case.

\begin{cor}\label{c3}
Assume that $f\colon I \to \overline{\R}$ is an upper semicontinuous function which has the property that if $f$ has a local maximum that is strict from one side, then it must not attain a local maximum strict from the other side.
Then $f$ is quasiconvex on $I$.
\end{cor}

\begin{cor}\label{c4}
Assume that $f\colon D \to \overline{\R}$ is a radially upper semicontinuous function which has the property that for every point $p\in D$ and every segment $[x,y]\subseteq D$ such that $p \in [x,y]$, if $f|_{[x,y]}$ has a local maximum at $p$ which is strict from one direction, then $f|_{[x,y]}$ has no local maximum which is strict from another direction. Then $f$ is quasiconvex on $D$.
\end{cor}
\begin{proof}
Fix $x, y \in D$ and apply Corollary \ref{c3} for $f$ restricted to the interval $[x,y]$.
\end{proof}

\begin{rem}
The property of $f$ given in Corollary \ref{c4} is in general weaker than the local quasiconcavity of $f$ at each point. Corollary \ref{c4} implies that in the class of upper semicontinuous functions, they are equivalent (and both are equivalent to quasiconvexity of $f$ on $I$). A similar equivalence is true in the case of radially upper semicontinuous functions defined on $D\subseteq X$.
\end{rem}

\begin{ex}
Again, the Cantor set $C\subset [0,1]$ provides us an illustrative example. Take $f= \mathbbm{1}_{C}$, then $f$ is upper semicontinuous and it is neither quasiconvex nor quasiconcave. Moreover, the set $H$ spoken of in Theorem \ref{t2} is equal to the Cantor set $C$. Mapping $f$ serves as an example of a function which satisfies the assumptions of Theorem \ref{t2}. At the same time, $f$ has no strict local maximum and some of its maxima are strict from one side. Therefore the assertion of this theorem and of Corollaries \ref{c3} and \ref{c4} cannot be strengthened considerably.  
\end{ex}

\section{Applications}

In the last section, we provide interpretations of our findings for a few mathematical objects, for which the assumption of quasiconvexity or quasiconcavity appears.

\subsection{Sion's minimax theorem}
One of generalizations of the von Neumann's minimax theorem is Sion's theorem obtained by Sion \cite{S}. This result is of particular interest to us since it joins assumptions of lower semicontinuity with quasiconvexity. Later, new proofs and several generalizations have been published by several authors. An elementary proof is due to Komiya \cite{Ko}.

\begin{thm}[Sion \cite{S}]
Assume that $X$ is a compact convex subset of a linear topological space and $Y$ is a convex subset of a linear topological space. Let $f \colon X\times Y\to \R$
be such that
\begin{itemize}
	\item[(i)] for each $x \in X$ $f(x, \cdot )$ is upper semicontinuous and quasiconcave on $Y$,
	\item[(ii)] for each $y \in Y$ $f (\cdot, y )$ is lower semicontinuous and quasiconvex on $X$.
\end{itemize}
Then $$\min\limits_{x \in X} \sup\limits_{y \in Y} f(x,y) =  \sup\limits_{y \in Y}\min\limits_{x \in X} f(x,y).$$
\end{thm}

Utilizing Corollary \ref{c2} we obtain the following extension of Sion's minimax theorem. 

\begin{cor}
Assume that $X$ is a compact convex subset of a linear topological space and $Y$ is a convex subset of a linear topological space. Let $f \colon X\times Y\to \R$
be such that
\begin{itemize}
	\item[(i)] for each $x \in X$ $f(x, \cdot )$ is upper semicontinuous and 
	$$\forall_{y, z \in Y}\exists_{u \in ]y,z[}\, f(x, u) \geq \min\{f(x, y), f(x, z) \},$$
	\item[(ii)] for each $y \in Y$ $f( \cdot, y )$ is lower semicontinuous and 
	$$\forall_{x, z \in X}\exists_{u \in ]x,z[}\, f(u, y) \leq \max\{f(x, y), f(z, y) \}.$$
\end{itemize}
Then $$\min\limits_{x \in X} \sup\limits_{y \in Y} f(x,y) =  \sup\limits_{y \in Y}\min\limits_{x \in X} f(x,y).$$
\end{cor}

\subsection{Quasiconvex risk measures} 
Assume that $\mathcal{X}$ is the set of all financial positions (i.e. a vector space that satisfies $L^{\infty}(\mathbb{P}) \subseteq \mathcal{X} \subseteq L^{0}(\mathbb{P})$ with a probability measure $\mathbb{P}$). A risk measure is a map $\rho\colon \mathcal{X}\to\R$. Classical axioms of the  risk measure are: normalization, i.e. $\rho(0)=0$, decreasing monotonicity and cash additivity (called also translativity), i.e.
$$\rho(X+c\cdot\mathbbm{1}) = \rho(X) - c, \quad X \in \mathcal{X}, c \in \R.$$
 
We speak about coherent risk measure if additionally positive homogeneity and subadditivity are assumed. The two last assumptions together are significantly stronger than convexity of $\rho$.
Convexity of the risk measure often was interpreted as follows: diversification of our portfolio will never increase the risk. However, as it is observed by Cerreia-Vioglio, Maccheroni, Marinacci, Montrucchio in \cite{C} a proper mathematical formulation of this principle is the quasiconvexity, rather than convexity of the risk measure. Moreover, cash additivity should be replaced by inequality.

Using Corollary \ref{c2} we can formulate the following characterization of quasiconvex risk measures.

\begin{cor} 
If a risk measure $\rho\colon \mathcal{X}\to\R$ is radially lower semicontinuous and has the following property:

{for each positions $X, Y \in \mathcal{X}$ there exists a diversification, i.e. a position strictly between $X$ and $Y$, say $Z=tX + (1-t)Y$ with some $t\in ]0,1[$ such that $Z$ is no more risky than each $X$ and $Y$,}

then $\rho$ is a quasiconvex risk measure. 
\end{cor}

Corollary \ref{c4} leads to a similar characterization.

\begin{cor} 
If a risk measure $\rho\colon \mathcal{X}\to\R$ is radially upper semicontinuous and has the following property:

{for each positions $X, Y \in \mathcal{X}$, if there exists a position  between $X$ and $Y$, say $Z=tX + (1-t)Y$ with some $t\in ]0,1[$ such that $Z$ locally strictly maximizes risk among positions $sX + (1-s)Y$ with $t-\delta <s\leq t$ for some $\delta>0$, then there is no position $Z=uX + (1-u)Y$ that  locally strictly maximizes risk among positions of the form  $rX + (1-r)Y$ with $u \leq r < u + \eps$ for some $\eps >0$,}

then $\rho$ is a quasiconvex risk measure. 
\end{cor}

\section{Conclusions}

In this article, we have presented two novel characterizations of quasiconvexity for radially semicontinuous mappings defined on convex subsets of real linear spaces. Our first characterization, based on a modification of a result by Dar\'{o}czy and P\'{a}les, provides a description of the set where quasiconvexity fails for radially lower semicontinuous functions. The second characterization, derived from an analysis of local maxima, applies to radially upper semicontinuous functions and connects the absence of specific patterns of strict local maxima on segments to the property of quasiconvexity.

As applications of our theoretical findings, we have extended Sion's minimax theorem by weakening the standard quasiconvexity and quasiconcavity assumptions to conditions based on the existence of points satisfying certain inequality relations on segments. Furthermore, we have provided two characterizations of quasiconvex risk measures in terms of their radial semicontinuity and specific properties related to diversification and local risk maximization.












\begin{bibdiv}
\begin{biblist}

\bib{C}{article}{
author={Cerreia-Vioglio, Simone}, 
author={Maccheroni, Fabio},  
author={Marinacci, Massimo}, 
author={Montrucchio, Luigi},
date = {2010},
pages = {743 - 774},
title = {Risk Measures: Rationality and Diversification},
volume = {21},
journal = {Mathematical Finance},
}

\bib{DP}{article}{
   author={Dar\'{o}czy, Zolt\'{a}n},
   author={P\'{a}les, Zsolt},
   title={A characterization of nonconvexity and its applications in the
   theory of quasi-arithmetic means},
   conference={
      title={Inequalities and applications},
   },
   book={
      series={Internat. Ser. Numer. Math.},
      volume={157},
      publisher={Birkh\"{a}user, Basel},
   },
   date={2009},
   pages={251--260},
	
}

\bib{Ko}{article}{
   author={Komiya, Hidetoshi},
   title={Elementary proof for Sion's minimax theorem},
   journal={Kodai Math. J.},
   volume={11},
   date={1988},
   number={1},
   pages={5--7},
}
	
\bib{K0}{article}{
   author={Kuhn, Norbert},
   title={A note on $t$-convex functions},
   conference={
      title={General inequalities, 4},
      address={Oberwolfach},
      date={1983},
   },
   book={
      series={Internat. Schriftenreihe Numer. Math.},
      volume={71},
      publisher={Birkh\"{a}user, Basel},
   },
   date={1984},
}

\bib{K}{article}{
   author={Kuhn, Norbert},
   title={On the structure of $(s,t)$-convex functions},
   conference={
      title={General inequalities, 5},
      address={Oberwolfach},
      date={1986},
   },
   book={
      series={Internat. Schriftenreihe Numer. Math.},
      volume={80},
      publisher={Birkh\"{a}user, Basel},
   },
   date={1987},
   pages={161--174},
}

\bib{Le}{article}{
   author={Leonetti, Paolo},
   title={A characterization of convex functions},
   journal ={Amer. Math. Monthly},
   date={2018},
 volume={125},
   number={9},
   pages={842--844},
}

		\bib{Lo}{book}{
   author={\L ojasiewicz, Stanis\l aw},
   title={An introduction to the theory of real functions},
   series={A Wiley-Interscience Publication},
   edition={3},
   note={With contributions by M. Kosiek, W. Mlak and Z. Opial;
   Translated from the Polish by G. H. Lawden;
   Translation edited by A. V. Ferreira},
   publisher={John Wiley \& Sons, Ltd., Chichester},
   date={1988},
   pages={x+230},
}

	\bib{P}{article}{
   author={P\'{a}les, Zsolt},
   title={Nonconvex functions and separation by power means},
   journal={Math. Inequal. Appl.},
   volume={3},
   date={2000},
   number={2},
   pages={169--176},
}	

\bib{RV}{book}{
   author={Roberts, A. Wayne},
   author={Varberg, Dale E.},
   title={Convex functions},
   series={Pure and Applied Mathematics, Vol. 57},
   publisher={Academic Press [Harcourt Brace Jovanovich, Publishers], New
   York-London},
   date={1973},
}

\bib{S}{article}{
   author={Sion, Maurice},
   title={On general minimax theorems},
   journal={Pacific J. Math.},
   volume={8},
   date={1958},
   pages={171--176},
}

\end{biblist}
\end{bibdiv}

\end{document}